    \newcommand{\href}[2]{#2}
\theoremstyle{plain}
  \newtheorem{lemma}[equation]{Lemma}
  \newtheorem{theorem}[equation]{Theorem}
  \newtheorem{corollary}[equation]{Corollary}
    \newtheorem{question}{Question}
\theoremstyle{definition}
\theoremstyle{remark}
  \newtheorem{remark}[equation]{Remark}
 \DeclareFontFamily{U}{manual}{}
 \DeclareFontShape{U}{manual}{m}{n}{ <->  manfnt }{}
 \newcommand{\manfntsymbol}[1]{%
    {\fontencoding{U}\fontfamily{manual}\selectfont\symbol{#1}}}
\endgroup\end{trivlist}}
 \newenvironment{example}[1][]{
   \refstepcounter{equation}
   \begin{proof}[Example~\theequation%
   \@ifnotempty{#1}{ (#1)}.]
   }
  {\end{proof}}
  \DeclareFontFamily{OT1}{pzc}{}
  \DeclareFontShape{OT1}{pzc}{m}{it}{<-> s * [1.100] pzcmi7t}{}
  \DeclareMathAlphabet{\mathpzc}{OT1}{pzc}{m}{it}
\newif\ifhascomments \hascommentstrue
  \newcommand{\anton}[1]{{\color{red}[[\ensuremath{\bigstar\bigstar\bigstar} #1]]}}
  \newcommand{\matt}[1]{{\color{red}[[\ensuremath{\spadesuit\spadesuit\spadesuit} #1]]}}
  \newcommand{\anton}[1]{}
  \newcommand{\matt}[1]{}
\renewcommand{\AA}{\mathbb{A}}
\DeclareMathOperator{\Aut}{\ensuremath{\mathcal{A}\kern-.125em\mathpzc{ut}}}
\newcommand{\can}{\mathrm{can}}
\newcommand{\C}{\mathcal C}
\newcommand{\CC}{\mathbb C}
\newcommand{\D}{\mathcal D}
\DeclareMathOperator{\Endo}{\ensuremath{\mathcal{E}\kern-.125em\mathpzc{nd}}}
\newcommand{\GG}{\mathbb G}
\DeclareMathOperator{\Hom}{\ensuremath{\mathcal{H}\kern-.125em\mathpzc{om}}}
\renewcommand{\L}{\mathcal L}
\renewcommand{\O}{\mathcal O}
\DeclareMathOperator{\ord}{ord}
\newcommand{\RR}{\mathbb R}
\DeclareMathOperator{\spec}{Spec}
\newcommand{\ttilde}[1]{\widetilde{#1}}
\newcommand{\U}{\mathcal U}
\newcommand{\X}{\mathcal{X}}
\newcommand{\Y}{\mathcal{Y}}
\newcommand{\Z}{\mathcal{Z}}
\newcommand{\ZZ}{\mathbb{Z}}
 \def\ari[#1]{\ar@{^(->}[#1]}
 \def\are[#1]{\ar[#1]^{\txt{\'et}}}
 \def\areh[#1]{\ar[#1]|{\txt{$H$-eq}}^{\txt{\'et}}}
 \def\ars[#1]{\ar@{->>}[#1]}
 \newcommand{\dplus}{\ar@{}[d]|{\mbox{$\oplus$}}}
 \newcommand{\dtimes}{\ar@{}[d]|{\mbox{$\times$}}}
\begin{document}
\title{A ``bottom up'' characterization of smooth Deligne-Mumford stacks}
\author{Anton Geraschenko\thanks{\texttt{geraschenko@gmail.com}}\ \ and Matthew Satriano\thanks{Supported by NSF grant DMS-0943832 and an NSF postdoctoral fellowship (DMS-1103788).}}
\date{}
\maketitle

In casual discussion, a stack $\X$ is often described as a variety $X$ (the coarse space of $\X$) together with stabilizer groups attached to some of its subvarieties. However, this description does not uniquely specify $\X$, as is illustrated by Example \ref{eg:non-affine-lines}. Our main result shows that for a large class of stacks one typically encounters, this description does indeed characterize them. Moreover, we prove that each such stack can be described in terms of two simple procedures applied iteratively to its coarse space: canonical stack constructions and root stack constructions.

More precisely, if $\X$ is a smooth separated tame Deligne-Mumford stack of finite type over a field $k$ with trivial generic stabilizer, it is completely determined by its coarse space $X$ and the ramification divisor (on $X$) of the coarse space morphism $\X\to X$. Therefore, to specify such a stack, it is enough to specify a variety and the \emph{orders} of the stabilizers of codimension 1 points. The group structures, as well as the stabilizer groups of higher codimension points, are then determined.

Recall that a $k$-scheme $U$ is said to have \emph{(tame) quotient singularities} if it is \'etale locally a quotient of a smooth variety by a finite group (of order relatively prime to the characteristic of $k$). The coarse space of a smooth tame Deligne-Mumford stack has tame quotient singularities. We say that an algebraic stack $\Y$ has (tame) quotient singularities if there is an \'etale cover $U\to\Y$, where $U$ is a scheme with (tame) quotient singularities. As explained in the next section, we can associate to every such stack a canonical smooth stack $\Y^\can$ over $\Y$. Also discussed in the next section, for an effective Cartier divisor $\C$ on a stack $\Z$ and a positive integer $n$, there is a universal morphism to $\Z$ which ramifies to order $n$ over $\C$, called the \emph{$n$-th root of $\C$}, denoted $\sqrt[n]{\C/\Z}$ (or $\sqrt{\C/\Z}$ if there is no confusion about what $n$ is).

\begin{theorem}\label{thm:bottom-up-smooth-DM}
  Let $\X$ be a smooth separated tame Deligne-Mumford stack of finite type over a field $k$ with trivial generic stabilizer. Let $X$ denote the coarse space of $\X$, $D\subseteq X$ the ramification divisor of the coarse space map $\pi\colon \X\to X$, and $\D\subseteq X^\can$ the corresponding divisor in $X^\can$. Let $e_i$ be the ramification degrees of $\pi$ over the irreducible components $\D_i$ of $\D$, and let $\sqrt{\D/X^\can}$ denote the root stack of $X^\can$ with order $e_i$ along $\D_i$.
  Then $\sqrt{\D/X^\can}$ has tame quotient singularities and $\pi$ factors as follows:
  \[
   \X \cong \sqrt{\D/X^\can}^\can \to \sqrt{\D/X^\can} \to X^\can \to X.
  \]
Moreover, if $D$ is Cartier (so that $\sqrt{D/X}$ is defined), then $\sqrt{D/X}$ has tame quotient singularities and $\pi$ factors as follows:
  \[
   \X\cong \sqrt{D/X}^\can\to \sqrt{D/X}\to X.
  \]
\end{theorem}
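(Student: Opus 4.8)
The plan is to build the morphisms occurring in the two factorizations from the universal properties of canonical and root stacks, to check that they are isomorphisms after an \'etale base change to the coarse space, and to obtain the two ``quotient singularities'' assertions from an explicit \'etale-local model. All the relevant constructions are \'etale-local on the coarse space, and by the structure theory of tame Deligne--Mumford stacks $\X$ is \'etale-locally on $X$ of the form $[\spec C/H]$ with $\spec C$ a smooth affine scheme and $H$ a finite group acting with trivial generic stabilizer; I will work in such charts, and when convenient in charts where $H$ is the stabilizer of a chosen point. In a chart $\X=[\spec C/H]$, let $N\trianglelefteq H$ be the subgroup generated by the pseudoreflections of the $H$-action, which is normal since a conjugate of a pseudoreflection is a pseudoreflection. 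Tameness makes the action \'etale-locally linearizable, so by Chevalley--Shephard--Todd the scheme $\spec C/N$ is smooth; hence $[\,\spec C/N\ /\ (H/N)\,]$ is a smooth separated tame DM stack with coarse space $\spec C^H=X$ and trivial stabilizer at every codimension-one point, i.e.\ it is $X^\can$. The quotient morphisms $[\spec C/H]\to[\,\spec C/N\ /\ (H/N)\,]$ glue, $N$ being intrinsic, to a morphism $\pi'\colon\X\to X^\can$ over $X$. Since $X$ is the coarse space of a smooth tame DM stack, $\mathrm{Sing}(X)$ has codimension $\geq 2$, so $X^\can\to X$ is an isomorphism near the generic point of each $D_i$; thus $\pi'$ ramifies to order $e_i$ along $\D_i$, and the universal property of the root stack factors $\pi'$ as $\X\xrightarrow{\ q\ }\sqrt{\D/X^\can}\to X^\can$. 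If moreover $D$ is Cartier, then $\pi$ itself ramifies to order $e_i$ over $D_i$, so the universal property of $\sqrt{D/X}$ gives $\X\to\sqrt{D/X}$ over $X$.

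The heart of the argument is the following \'etale-local model. In a chart in which $H$ is the stabilizer of the relevant point, one shows that, \'etale-locally, $q$ is the morphism
\[
  [\spec C/H]\ \longrightarrow\ [\,\spec C/[N,N]\ /\ (H/[N,N])\,]\ \cong\ \sqrt{\D/X^\can}
\]
induced by $\spec C\to\spec C/[N,N]$ and $H\to H/[N,N]$; equivalently, the stabilizer of the corresponding point of $\sqrt{\D/X^\can}$ is the extension of $H/N$ by the abelianization $N/[N,N]\cong\prod_i\mu_{e_i}$ of the reflection group $N$, and $q$ realizes the canonical surjection on stabilizers. Granting this: $\spec C/[N,N]$ is the quotient of a smooth variety by a finite tame group, hence a scheme with tame quotient singularities, and the torsor $\spec C/[N,N]\to[\,\spec C/[N,N]\ /\ (H/[N,N])\,]$ is finite \'etale, so $\sqrt{\D/X^\can}$ has tame quotient singularities. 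Moreover $[N,N]$ contains no pseudoreflection (a pseudoreflection has nontrivial image in $N/[N,N]$), so by Chevalley--Shephard--Todd the smooth locus of $\sqrt{\D/X^\can}$ is exactly the open locus over which $[N,N]$ acts freely, and over that locus $q$ is an isomorphism.

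Consequently $q\colon\X\to\sqrt{\D/X^\can}$ is a smooth separated tame DM stack, with coarse space $X$, which is an isomorphism over the smooth locus of $\sqrt{\D/X^\can}$; by the characterization of the canonical stack, $\X\cong\sqrt{\D/X^\can}^\can$, and composing with the factorization of the first paragraph yields the first factorization. For the Cartier case, write \'etale-locally $X=[\spec B/G]$ with $G$ acting without pseudoreflections and $D=(f=0)$ with $f\in B^G$; then $\sqrt{D/X}\cong[\,\sqrt{\hat D/\spec B}\ /\ G\,]$ with $\hat D\subseteq\spec B$ the preimage of $D$, and applying the first part of the theorem to the smooth DM stack $\X\times_X\spec B$ over the smooth scheme $\spec B$ (whose canonical stack is itself and whose ramification divisor is $\hat D$) shows that $\sqrt{\hat D/\spec B}$, hence its finite \'etale quotient $\sqrt{D/X}$, has tame quotient singularities; the same local computation as above --- now with $G$ in the role of $H/N$ --- shows $\X\to\sqrt{D/X}$ is an isomorphism over the smooth locus, so $\X\cong\sqrt{D/X}^\can$.

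The main obstacle is the \'etale-local model of the second paragraph: making precise the structure of the root stack of a reflection arrangement on a smooth variety and identifying the residual group $H/[N,N]$ together with the morphism $q$. Once that is in hand everything else is formal --- the quotient-singularity assertions reduce to the fact that a torsor under a finite tame group is finite \'etale, and the isomorphism assertions to the universal property of the canonical stack.
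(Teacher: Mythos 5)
Your outline follows the route that the paper explicitly mentions but declines to carry out: the \'etale-local, representation-theoretic description of the factorization (the paper's Theorem \ref{thm:local-characterization}), globalized by descent. The paper's actual proof of Theorem \ref{thm:bottom-up-smooth-DM} is quite different and entirely global: it produces $g\colon\X\to\sqrt{\D/X^\can}$ from the universal properties, shows $g$ is unramified in codimension~1 and an isomorphism over the locus where $\D$ is smooth and snc (Corollary \ref{lem:unram-iso-on-coarse-implies-iso}), deduces that $g$ is Stein, and then uses cohomological affineness and the theory of good moduli spaces to conclude that $g$ is a relative coarse space morphism, which yields both the tame-quotient-singularities claim and the identification with the canonical stack. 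The problem with your version is that everything is made to rest on the ``\'etale-local model'' of your second paragraph --- the identification $\sqrt{\D/X^\can}\cong[(\spec C/[N,N])/(H/[N,N])]$ together with $N/[N,N]\cong\prod_i\mu_{e_i}$ --- and you explicitly leave that unproved, calling it ``the main obstacle.'' But that identification \emph{is} the theorem in this approach: one must show that the abelianization of the reflection group $N$ is the product of cyclic groups of orders equal to the orders of the cyclic stabilizers of the hyperplane orbits (via the characters by which $N$ acts on the orbit products $f_{N\cdot W_i}$ of linear forms), compute the invariant ring $C^{[N,N]}=C^N[f_{N\cdot W_1},\dots,f_{N\cdot W_k}]$, and exhibit the cartesian square over $\AA^{n+k}/(N/[N,N])$ identifying the quotient as a root stack with the correct orders $e_i$. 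This occupies the entire proof of Theorem \ref{thm:local-characterization} and is not formal; a proposal that defers it has not proved the theorem. (The surrounding formal steps --- $[N,N]$ contains no pseudoreflections because commutators have determinant $1$, hence $q$ is an isomorphism away from codimension~2, hence $\X\cong\sqrt{\D/X^\can}^\can$ --- are fine.)

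There is also a genuine error in your treatment of the Cartier case. Writing $X=\spec B/G$ \'etale-locally with $\spec B$ smooth and $G$ acting without pseudoreflections, you claim $\sqrt{D/X}\cong[\sqrt{\hat D/\spec B}\,/\,G]$. Since $\spec B\to X$ is a $G$-torsor only over $X^\can=[\spec B/G]$, and $X^\can\to X$ is not an isomorphism when $X$ is singular, what that quotient actually computes is $X^\can\times_X\sqrt{D/X}$ (equivalently a root stack of $X^\can$), not $\sqrt{D/X}$ itself. These differ precisely over the singular locus of $X$ when $D$ meets it, which is exactly the case the second statement of the theorem is about. The paper avoids this by rerunning its global argument with $X$ in place of $X^\can$ and $D$ in place of $\D$ (enlarging the excised locus by $\mathrm{Sing}(X)$); a local proof of the Cartier case would instead have to analyze $[\spec(A[t]/(t^n-f))/\mu_n]$ with $A=B^G$ directly, which your reduction does not do.
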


\begin{remark}
  For $\X$ with non-trivial generic stabilizer, Theorem \ref{thm:bottom-up-smooth-DM} can be combined with \cite[Theorem A.1]{AOV}, which shows that $\X$ is a gerbe over a smooth stack with trivial generic stabilizer.
\end{remark}
\begin{remark}
\label{doesnt-matter-how-you-root}
In the statement of Theorem \ref{thm:bottom-up-smooth-DM}, there are several ways one can construct $\sqrt{\D/X^\can}$. For example, it can be constructed as an iterated sequence of root stacks along the irreducible components
$\sqrt[e_m]{\D_m/\dots\sqrt[e_1]{\D_1/X^\can}}$. Alternatively, if say $e_1=e_2$, one can combine the two steps of rooting along $\D_1$ and $\D_2$ into a single step $\sqrt[e_1]{(\D_1\cup\D_2)/X^\can}$. As noted in the last paragraph of \cite[\S1.3b]{fmn}, these two stacks are \emph{not} isomorphic. However, the proof of Theorem \ref{thm:bottom-up-smooth-DM} shows that both have tame quotient singularities and their canonical stacks are isomorphic.
\end{remark}
\begin{remark}
  The proof we present can be applied if $k$ is an arbitrary regular excellent base, but we work over a field for clarity. See \cite{Vistoli-canonical} for the construction of canonical stacks in this case. The remainder of the proof applies directly.
\end{remark}

\subsection*{Acknowledgments}
It is a pleasure to thank Stephen Griffeth, Christian Liedtke, and Angelo Vistoli for their help with several technical points \cite{griffeth, Vistoli-canonical}.

\section*{Background and preliminary results}

The first stack-theoretic construction we employ is that of canonical stacks. If $X$ is a finite type algebraic space, then \cite[2.9 and the proof of 2.8]{vistoli:intersection} shows $X$ has tame quotient singularities if and only if $X$ is the coarse space of a smooth tame Deligne-Mumford stack. Moreover, in this case there is a smooth tame Deligne-Mumford stack $X^\can$ with coarse space $X$ such that the coarse space morphism $X^\can\to X$ is an isomorphism away from codimension 2. We refer to $X^\can$ as \emph{the canonical stack} over $X$.
\begin{remark}[Universal property of canonical stacks {\cite[Theorem 4.6]{fmn}}]\label{rmk:canonical-universal-property}
  Suppose $\X$ is a smooth Deligne-Mumford stack with coarse space morphism $\pi\colon\X\to X$ which is an isomorphism away from a locus of codimension at least 2. Then $\X$ is universal (terminal) among smooth Deligne-Mumford stacks with trivial generic stabilizer and a dominant codimension-preserving morphism to $X$.
\end{remark}
This universal property is stable under base change by \'etale morphisms (or any other codimension-preserving morphisms). That is, for $U\to X$ \'etale, $X^\can\times_X U$ is the canonical stack over $U$. By descent, every algebraic stack $\Y$ with tame quotient singularities has a canonical stack $\Y^\can$ and a coarse space morphism $\pi:\Y^\can \to\Y$ which is an isomorphism away from a set of codimension at least 2.

%

The second stack-theoretic construction we employ is that of root stacks (see \cite[\S1.3]{fmn} for a brief introduction). Given an effective Cartier divisor $\C$ on a stack $\Z$ and a positive integer $n$, the \emph{$n$-th root of $\C$}, denoted $\sqrt[n]{\C/\Z}$, is the $\Z$-stack which is universal (terminal) among all $\Z$-stacks for which $\C$ pulls back to $n$ times another divisor. Explicitly, the root stack is given by the cartesian diagram below. Recall that $\C$ defines a morphism $\Z\to [\AA^1/\GG_m]$ since $[\AA^1/\GG_m]$ parameterizes line bundles with sections (i.e.~effective Cartier divisors).
\[\xymatrix{
  \sqrt[n]{\C/\Z}\ar[r]\ar[d] & [\AA^1/\GG_m]\ar[d]^{\hat{\;}n}\\
  \Z\ar[r]^-{\C} & [\AA^1/\GG_m].
}\]
Here, the $n$-th power map $\hat{\;}n\colon [\AA^1/\GG_m]\to [\AA^1/\GG_m]$ takes a line bundle with section $(\L,s)$ to $(\L^{\otimes n},s^{\otimes n})$.

\begin{lemma}\label{lem:affine-diag}
Root stack morphisms and canonical stack morphisms have affine diagonal.
\end{lemma}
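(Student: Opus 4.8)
The plan is to treat root stacks and canonical stacks separately, in each case reducing the diagonal to an explicit local model.

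\emph{Root stacks.} From the cartesian square defining $\sqrt[n]{\C/\Z}$, the relative diagonal of $\sqrt[n]{\C/\Z}\to\Z$ is a base change of the relative diagonal $\Delta$ of the $n$-th power map $\hat{\;}n\colon[\AA^1/\GG_m]\to[\AA^1/\GG_m]$, since formation of the relative diagonal commutes with base change; as affine morphisms are stable under base change, it suffices to prove $\Delta$ is affine. A morphism from an affine scheme $T$ to the target $[\AA^1/\GG_m]\times_{[\AA^1/\GG_m]}[\AA^1/\GG_m]$ of $\Delta$ amounts to two line bundles with section $(\L_1,s_1)$ and $(\L_2,s_2)$ on $T$ together with an isomorphism $\varphi\colon\L_1^{\otimes n}\xrightarrow{\sim}\L_2^{\otimes n}$ carrying $s_1^{\otimes n}$ to $s_2^{\otimes n}$, and the fiber product $T\times_{[\AA^1/\GG_m]^{\times 2}}[\AA^1/\GG_m]$ then represents the functor of isomorphisms $\psi\colon\L_1\xrightarrow{\sim}\L_2$ satisfying $\psi(s_1)=s_2$ and $\psi^{\otimes n}=\varphi$. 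The isomorphisms $\L_1\xrightarrow{\sim}\L_2$ form the $\GG_m$-torsor $\spec_T\bigl(\bigoplus_{d\in\ZZ}(\L_1^{\vee}\otimes\L_2)^{\otimes d}\bigr)$, which is affine over $T$, and the two equations cut out a closed subscheme of it; hence the fiber product is affine over $T$, so $\Delta$ is affine.

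\emph{Canonical stacks.} Here I would use that affineness of a morphism can be checked étale-locally on the target, that formation of a relative diagonal commutes with base change, and that $(-)^{\can}$ commutes with étale base change. For $\U\to\Y$ étale surjective one has $\U^{\can}\times_{\U}\U^{\can}=(\Y^{\can}\times_{\Y}\Y^{\can})\times_{\Y}\U$, and $\Delta_{\Y^{\can}/\Y}$ pulls back along it to $\Delta_{\U^{\can}/\U}$; so one may reduce first to the case that $\Y$ is an affine scheme, and then --- using that a scheme with tame quotient singularities is étale-locally a quotient of a smooth affine scheme by a finite group of order prime to $\charp(k)$ --- to the case $\Y=W/\Ga$ with $W$ smooth affine and $\Ga$ finite. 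In this case $\Y^{\can}$ is the quotient $[W'/\Ga']$ of a smooth affine scheme by a finite group, where $W'=W/N$, $\Ga'=\Ga/N$, and $N\trianglelefteq\Ga$ is the (normal) subgroup generated by the pseudo-reflections, $W/N$ being smooth by Chevalley--Shephard--Todd; thus $\Y^{\can}\times_{\Y}\Y^{\can}=[(W'\times_{\Y}W')/(\Ga'\times\Ga')]$ with $W'\times_{\Y}W'$ affine, and pulling $\Delta_{\Y^{\can}/\Y}$ back along the finite étale cover $W'\times_{\Y}W'\to\Y^{\can}\times_{\Y}\Y^{\can}$ turns it into the morphism of affine schemes $\Ga'\times W'\to W'\times_{\Y}W'$, $(\gamma,w)\mapsto(w,\gamma w)$. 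This is affine, so by fppf descent $\Delta_{\Y^{\can}/\Y}$ is affine.

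The routine parts are the base-change and étale-descent reductions. The two points needing care are, for root stacks, the identification of $\Delta$ with the Isom-functor above --- after which the argument is immediate --- and, for canonical stacks, the reduction to a global finite-group quotient as a local model. For the latter one could instead invoke that a separated Deligne--Mumford stack has finite, hence affine, diagonal over $k$, together with the fact that $\Y^{\can}\times_{\Y}\Y^{\can}\to\Y^{\can}\times_k\Y^{\can}$ is a closed immersion once $\Y$ is a separated algebraic space, so that $\Delta_{\Y^{\can}/\Y}$ is affine because its composition with a closed immersion is the affine morphism $\Delta_{\Y^{\can}/k}$.
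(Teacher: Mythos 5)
Your proof is correct and follows essentially the same strategy as the paper: reduce the root-stack case by base change to the diagonal of the $n$-th power map on $[\AA^1/\GG_m]$, and reduce the canonical-stack case \'etale-locally to the model $[U/H]\to U/H$ with $U$ affine and $H$ finite. The only cosmetic difference is that you verify affineness of the power map's diagonal by an explicit Isom-functor computation, whereas the paper checks it smooth-locally on the target via $[\AA^1/\mu_n]=\AA^1\times_{[\AA^1/\GG_m]}[\AA^1/\GG_m]$; both work, and your appeal to Chevalley--Shephard--Todd in the canonical-stack step is unnecessary since only affineness, not smoothness, of the local cover is needed.
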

\begin{proof}
To check that root stack morphisms have affine diagonal, it is enough to show that the $n$-th power map $[\AA^1/\GG_m]\to [\AA^1/\GG_m]$ has affine diagonal. Since this can be checked smooth locally on the target, and $[\AA^1/\mu_n] = \AA^1\times_{[\AA^1/\GG_m]}[\AA^1/\GG_m] $, the result follows.

We next consider canonical stack morphisms. Note that if $H$ is a finite group acting on an affine variety $U$, then the coarse space map $[U/H]\to U/H$ has affine diagonal. Since all canonical stack morphisms have this form smooth locally on the target, they have affine diagonal as well.
\end{proof}

\begin{lemma}\label{lem:rep-local}
 Suppose $f\colon \X\to \Y$ is a morphism of algebraic stacks, and $\Z\to \Y$ is a surjective locally finite type morphism. Then $f$ is representable if and only if $f_\Z:\X\times_\Y \Z\to \Z$ is representable.
\end{lemma}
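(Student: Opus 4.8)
The plan is to rephrase representability in terms of the diagonal and then descend that condition along $\Z\to\Y$. Recall the standard fact that a morphism of algebraic stacks is representable (by algebraic spaces) if and only if its diagonal is a monomorphism. The ``only if'' direction of the lemma is then immediate, since having a monomorphic diagonal is stable under arbitrary base change: if $f$ is representable, so is $f_\Z$.

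For the converse, suppose $f_\Z$ is representable. Formation of the diagonal commutes with base change, so $\Delta_{f_\Z}$ is the pullback of $\Delta_f\colon\X\to\X\times_\Y\X$ along the projection $q\colon(\X\times_\Y\X)\times_\Y\Z\to\X\times_\Y\X$; being the base change of $\Z\to\Y$, the morphism $q$ is surjective and locally of finite type. By hypothesis $\Delta_{f_\Z}$ is a monomorphism, so the lemma is reduced to showing that, in this situation, a morphism that becomes a monomorphism after the surjective base change $q$ is itself a monomorphism.

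To prove this I would use the geometric-fibre criterion for monomorphisms. The diagonal $\Delta_f$ is representable by algebraic spaces and locally of finite type, and for such a morphism being a monomorphism is equivalent to every geometric fibre being empty or a single reduced point (equivalently, to being universally injective and unramified). This condition is unaffected by base change along a field extension. Given a geometric point of $\X\times_\Y\X$, surjectivity of $q$ lets one lift it --- after enlarging the algebraically closed base field, by choosing a point of the nonempty fibre of $q$ over it --- to a geometric point of $(\X\times_\Y\X)\times_\Y\Z$; over that point the geometric fibre of $\Delta_f$ becomes a geometric fibre of $\Delta_{f_\Z}$, hence is empty or a single reduced point. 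Therefore the same holds for every geometric fibre of $\Delta_f$, so $\Delta_f$ is a monomorphism and $f$ is representable.

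I expect the real content to lie entirely in this last descent step. Being a monomorphism does not descend along arbitrary surjective base changes, and the feature that rescues the argument is that the diagonal $\Delta_f$ is locally of finite type: that is precisely what reduces ``monomorphism'' to a fibrewise condition, which --- unlike, say, being an isomorphism --- is insensitive to the field extensions one may need in order to lift geometric points along $q$. The remaining ingredients (the equivalence between representability and having a monomorphic diagonal, compatibility of the diagonal with base change, and invariance of geometric fibres under extension of the base field) are routine.
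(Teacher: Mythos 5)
Your proof is correct, and it reaches the conclusion through a genuinely different key lemma than the paper, though the underlying mechanism is the same. The paper's argument is two lines: it cites Conrad's criterion (Corollary 2.2.7 of \emph{Arithmetic moduli of generalized elliptic curves}) that a morphism of algebraic stacks is representable if and only if its fibers over geometric points are algebraic spaces, and then observes that since $\Z\to\Y$ is surjective and locally of finite type, every geometric fiber of $f$ becomes, after an extension of algebraically closed fields, a geometric fiber of $f_\Z$. You instead invoke the equivalence of representability with $\Delta_f$ being a monomorphism and then carry out the descent by hand, using that $\Delta_f$ is automatically representable by algebraic spaces and locally of finite type, so that ``monomorphism'' becomes the fibrewise condition ``unramified and universally injective,'' i.e.\ geometric fibers empty or a single reduced point. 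Both proofs therefore reduce to a condition on geometric fibers that is insensitive to enlarging the algebraically closed field and use the hypotheses on $\Z\to\Y$ only to lift geometric points; what yours buys is self-containedness, replacing the appeal to Conrad's fiberwise representability criterion with standard facts about diagonals and monomorphisms, at the cost of length. Your closing diagnosis is also on target and matches the role the locally-finite-type hypothesis plays in the paper's citation: local finiteness (here, of the diagonal, which holds automatically) is precisely what turns the relevant property into one detectable on geometric fibers, so that surjectivity of $q$ suffices for the descent.
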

\begin{proof}
  By \cite[Corollary 2.2.7]{conrad:elliptic}, a morphism is representable if and only if its geometric fibers are algebraic spaces. Since $\Z\to \Y$ is locally finite type, the geometric fibers of $f$ and $f_\Z$ are the same.
\end{proof}

\begin{lemma}
\label{l:etovercs}
  Let $\U$ be a separated Deligne-Mumford stack with trivial generic stabilizer.  If $\U$ is \'etale over its coarse space, then $\U$ is an algebraic space.
\end{lemma}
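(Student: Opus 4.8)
The goal is to show that the inertia stack $I\U\to\U$ is an isomorphism; since $\U$ is Deligne-Mumford, this is equivalent to $\U$ being an algebraic space. Write $\pi\colon\U\to U$ for the coarse space morphism, which we are assuming is \'etale. The plan is to show that \'etaleness of $\pi$, together with separatedness, forces $I\U\to\U$ to be \emph{finite \'etale}; a finite \'etale morphism has locally constant degree, so the hypothesis that $\U$ has trivial generic stabilizer --- which says this degree is $1$ at the generic point of each component --- then forces it to be identically $1$, i.e.\ forces $I\U\to\U$ to be an isomorphism.

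Finiteness of $I\U\to\U$ is immediate: $\U$ being a separated Deligne-Mumford stack means its diagonal $\Delta_\U$ is proper (separatedness) and unramified (the Deligne-Mumford condition), hence quasi-finite, hence finite; and $I\U\to\U$ is a base change of $\Delta_\U$. For \'etaleness, observe that since $U$ is an algebraic space it has trivial inertia, so the absolute inertia $I\U$ agrees with the relative inertia $I_{\U/U}$, and $I_{\U/U}\to\U$ is obtained from the relative diagonal $\Delta_{\U/U}\colon\U\to\U\times_U\U$ by base change along itself; hence it suffices to show $\Delta_{\U/U}$ is \'etale. But $\Delta_{\U/U}$ is a section of the second projection $p_2\colon\U\times_U\U\to\U$, and $p_2$ is \'etale, being a base change of $\pi$. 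By the cancellation property for \'etale morphisms --- a morphism over a base $S$ from a stack \'etale over $S$ to a stack unramified over $S$ is itself \'etale --- applied with $S=\U$ and $p_2$ the structure map, $\Delta_{\U/U}$ is \'etale, so $I\U\to\U$ is \'etale. This completes the reduction: $I\U\to\U$ is finite \'etale, so its degree is locally constant, hence identically $1$ by the trivial generic stabilizer hypothesis, and $\U$ is an algebraic space.

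The step I expect to require the most care is the \'etaleness of $I\U\to\U$, because $\pi$ (and hence $p_2$) is typically \emph{not} representable, so one cannot simply reduce to schemes: one must either invoke the cancellation property in the generality of algebraic stacks, or argue \'etale-locally on $U$. For the local argument, write $\U\cong[W/G]$ over $U\cong W/G$ with $W$ affine and $G$ a finite group; then $W\to[W/G]$ is a $G$-torsor, hence finite \'etale, so $W\to W/G$ is \'etale and in particular unramified. Therefore, for each $g\in G$, the fixed locus $W^g$ --- which is the equalizer of the two $W/G$-morphisms $g,\id_W\colon W\to W$ --- is open in $W$, since the equalizer of two morphisms into a scheme that is unramified over the base is an open immersion. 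Consequently the pullback of $I[W/G]\to[W/G]$ along the \'etale atlas $W\to[W/G]$, namely $\coprod_{g\in G}W^g\to W$, is \'etale, whence $I[W/G]\to[W/G]$ is \'etale.

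Both hypotheses are genuinely used. Separatedness is exactly what upgrades $I\U\to\U$ from quasi-finite \'etale to finite \'etale, so that its degree is locally constant as a function \emph{on $\U$}; without it the argument and the statement both fail. And the trivial generic stabilizer hypothesis cannot be dropped: for instance $B(\ZZ/n\ZZ)$ is a separated Deligne-Mumford stack which is \'etale over its coarse space $\spec k$ but is not an algebraic space.
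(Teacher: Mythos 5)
Your proof is correct, but it takes a genuinely different route from the paper's. The paper also reduces, via the local structure theorem \cite[Lemma 2.2.3]{Abramovich-Vistoli}, to a quotient presentation $[V/K]$ with $K$ the stabilizer of a geometric point $v$, but then argues pointwise at $v$: \'etaleness of the composite $V\to[V/K]\to V/K$ forces $K$ to act trivially on every jet space of $v$ by the formal criterion for \'etaleness, while the faithfulness result quoted from \cite{brauer} forces $K$ to act faithfully on some jet space (this is where trivial generic stabilizer enters, guaranteeing that $K$ acts faithfully on $V$); hence $K$ is trivial. You instead run the argument through the inertia stack: separatedness plus the Deligne--Mumford condition gives finiteness of $I\U\to\U$, \'etaleness of $\pi$ gives \'etaleness of $I\U\to\U$, and local constancy of the degree of a finite \'etale morphism together with triviality at generic points forces $I\U\to\U$ to be an isomorphism. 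Both arguments ultimately rest on the same local presentation, but yours replaces the jet-space faithfulness input with the elementary observation that the fixed loci $W^g$ are open when $W\to W/G$ is unramified, and it makes transparent exactly which hypothesis does which job (separatedness for finiteness, hence local constancy of the degree; trivial generic stabilizer for the degree being $1$). The one delicate point in your global version is the cancellation property for the non-representable relative diagonal $\Delta_{\U/U}$, which requires the formal lifting criteria in the generality of algebraic stacks; you correctly flag this, and the local computation you supply sidesteps it entirely, so there is no gap. Your closing remarks are also consistent with the paper: its Example \ref{eg:non-affine-lines}(3), the classifying stack of the non-separated affine line over $\AA^1$, is precisely an instance where the conclusion fails in the absence of separatedness because the inertia is quasi-finite \'etale but not finite.
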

\begin{proof}
  It suffices to look \'etale locally on the coarse space of $\U$.  We can therefore assume $\U=[V/K]$, where $V$ is an algebraic space and $K$ is the stabilizer of a geometric point $v$ of $V$ by \cite[Lemma 2.2.3 and its proof\footnote{In the proof of \cite[Lemma 2.2.3]{Abramovich-Vistoli}, the group $\Gamma$ is the stabilizer of the $x_0$.}]{Abramovich-Vistoli}. Then the composition $V\to [V/K]\to V/K$ is \'etale, so $K$ acts trivially on every jet space of $v$ by the formal criterion for \'etaleness. On the other hand, $K$ acts faithfully on some jet space of $v$ by the proof of \cite[Proposition 4.4]{brauer} (smoothness is not needed). Thus, $K$ is trivial, so $\U=V$ is an algebraic space.
\end{proof}

\begin{corollary}
  \label{cor:etbirat==>rep}
  Let $\U$ and $\Y$ be smooth separated tame Deligne-Mumford stacks.  Suppose a morphism $g\colon\U\to\Y$ is birational, and the ramification locus in $\U$ is of codimension greater than 1. Then $g$ is representable and \'etale.
\end{corollary}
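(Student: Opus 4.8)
The plan is to prove first that $g$ is unramified, hence \'etale, and then separately that it is representable; we may assume throughout that $\U$ and $\Y$ are integral.

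\emph{\'Etaleness.} Since $g$ is birational it is dominant and $\U$, $\Y$ have a common dimension $n$, and the natural map $\phi\colon g^{*}\Omega_{\Y/k}\to\Omega_{\U/k}$ of rank-$n$ locally free sheaves on $\U$ restricts to an isomorphism over the dense open on which $g$ is an isomorphism. Hence $\det\phi$ is a nonzero morphism of line bundles on $\U$, so its zero locus is either empty or an effective Cartier divisor; moreover this zero locus coincides with the support of $\cok\phi$, which is exactly the locus where $g$ fails to be unramified. (Both facts may be checked \'etale-locally on $\U$, so the stacky setting causes no difficulty; alternatively, invoke Zariski--Nagata purity of the branch locus.) By hypothesis this locus has codimension $>1$, so it is empty and $g$ is unramified. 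An unramified morphism has $0$-dimensional fibers, and since $\U$ is Cohen--Macaulay, $\Y$ is regular, and $\dim\U=\dim\Y$, the miracle-flatness criterion shows $g$ is flat; being flat, unramified and of finite type, $g$ is \'etale.

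\emph{Representability.} By Lemma \ref{lem:rep-local} it suffices to check that $g$ becomes representable after base change along a single surjective, locally finite type morphism to $\Y$. Choose an \'etale surjection $V\to\Y$ with $V$ a scheme (passing to connected components, we may take $V$ integral) and set $\U_V:=\U\times_{\Y}V$. Then $g_V\colon\U_V\to V$ is \'etale, and it is birational because the isomorphism locus of $g$ pulls back to a dense open of $V$; in particular $\U_V$ is a smooth, separated, integral, tame Deligne--Mumford stack, and being isomorphic to $V$ over a dense open it has trivial generic stabilizer. Let $W$ be the coarse space of $\U_V$. Then $W$ is a normal integral algebraic space, and $W\to V$ is separated, quasi-finite (since $\U_V\to W$ is surjective and $g_V$ is quasi-finite), and birational, hence an open immersion by Zariski's main theorem; moreover $g_V$ factors through $W\hookrightarrow V$. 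It follows that $\U_V\times_{V}W\cong\U_V$, so the coarse space map $\U_V\to W$ is the base change of the \'etale morphism $g_V$ along the open immersion $W\hookrightarrow V$, and is therefore \'etale. Lemma \ref{l:etovercs} now shows $\U_V$ is an algebraic space, so $g_V$ is representable, and hence so is $g$.

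The only part needing genuine care is the bookkeeping around coarse spaces in the second step: one must verify that $g_V$ stays birational after base change (so that $\U_V$ has trivial generic stabilizer), and that the coarse space $W$ of $\U_V$ is normal and maps quasi-finitely and birationally to $V$---which is exactly what makes $W\hookrightarrow V$ an open immersion and lets us exhibit $\U_V\to W$ as \'etale so as to apply Lemma \ref{l:etovercs}. The purity input in the first step is standard but deserves explicit mention.
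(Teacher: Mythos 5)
Your argument is correct, and it rests on the same two pillars as the paper's proof---Lemma \ref{lem:rep-local} to localize representability on the target and Lemma \ref{l:etovercs} to finish---but the route through them is genuinely different in both halves. For \'etaleness, the paper first replaces $\Y$ by a scheme $Y$, descends to the coarse space $U$ of $\U$, checks that $U\to Y$ is unramified in codimension $1$ via multiplicativity of ramification indices in towers of discrete valuation rings, and then invokes Zariski--Nagata purity twice (for the normal-over-regular map $U\to Y$ and for a smooth chart $V\to Y$); you instead stay on the smooth stack $\U$ and use the determinant-of-the-Jacobian form of purity (a nonzero map of line bundles cannot degenerate only in codimension $\ge 2$) together with miracle flatness. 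Your version needs purity only in the easy smooth-source-and-target case and avoids the coarse-space bookkeeping, at the cost of the flatness verification. For representability, the paper gets \'etaleness of the coarse space map $\U\to U$ directly from \'etaleness of $g$ and $\bar g$, whereas you run Zariski's Main Theorem on $W\to V$ to realize the coarse space of $\U_V$ as an open subscheme of $V$ and then base-change; both arguments then conclude with Lemma \ref{l:etovercs}. Two small points deserve explicit mention in your write-up: Lemma \ref{l:etovercs} requires $\U_V=\U\times_\Y V$ to be \emph{separated}, which you should arrange by taking $V\to\Y$ to be an affine \'etale cover (then $V\to\Y$ is separated because the diagonal of $\Y$ is finite); and a single integral connected component of $V$ need not surject onto $\Y$, so you should treat the components of $V$ one at a time rather than ``passing to connected components.'' Neither issue affects the substance of the argument.
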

\begin{proof}
  By Lemma \ref{lem:rep-local}, we can replace $\Y$ by an \'etale cover, and so can assume $\Y=Y$ is a scheme. Then $\U$ has trivial generic stabilizer and $g$ factors through the coarse space $U$ of $\U$.

  We show that the induced map $\bar g\colon U\to Y$ is unramified in codimension 1.  Let $u\in U$ be a point of codimension 1 and let $y\in Y$ be its image in $Y$.  Being the coarse space of a smooth tame Deligne-Mumford stack, $U$ has tame quotient singularities. In particular, it is normal, and so $\O_{U,u}$ is a discrete valuation ring.  If $V\to\U$ is an \'etale cover and $v\in V$ is a point of codimension 1 which maps to $u$, then $\O_{V,v}$ is unramified over $\O_{Y,y}$ by hypothesis.  Since ramification indices multiply in towers of discrete valuation rings, we see that $\O_{U,u}$ is unramified over $\O_{Y,y}$ as well.

  Now we have that $U$ is normal, $Y$ is smooth, and $\bar g$ is dominant and unramified in codimension 1, so the purity of the branch locus theorem \cite[Expos\'e X, Theorem 3.1]{sga1} shows that $\bar g$ is \'etale. Similarly, $V\to Y$ is a map between two smooth varieties which is dominant and unramified in codimension 1, so it is \'etale by purity. It follows that $g$ is \'etale. Since $g$ and $\bar{g}$ are \'etale, the coarse space map $\U\to U$ is \'etale, and hence an isomorphism by Lemma \ref{l:etovercs}. This proves representability of $g$.
\end{proof}

\begin{corollary}\label{lem:unram-iso-on-coarse-implies-iso}
  Let $f\colon \X\to \Y$ be a morphism of smooth separated tame Deligne-Mumford stacks with trivial generic stabilizers. If $f$ is unramified in codimension 1 and induces an isomorphism of coarse spaces, then $f$ is an isomorphism.
\end{corollary}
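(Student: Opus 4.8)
The plan is to combine Corollary~\ref{cor:etbirat==>rep} with the properness of the coarse space morphism.

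First I would check that $f$ is birational, in fact an isomorphism over a dense open of the common coarse space. Using the isomorphism of coarse spaces to identify both with a single space $X$, the coarse space morphisms $\X\to X$ and $\Y\to X$ are isomorphisms over a dense open of $X$, namely the (open, and dense since the generic stabilizers are trivial) locus over which the inertia is trivial. Over the intersection $V$ of these two dense opens, $f$ is a morphism between two copies of $V$ compatible with the structure maps to $V$, hence an isomorphism. In particular $f$ is birational, and the hypothesis that $f$ is unramified in codimension~$1$ says exactly that its ramification locus in $\X$ has codimension greater than~$1$. So Corollary~\ref{cor:etbirat==>rep} applies and shows that $f$ is representable and \'etale.

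Next I would upgrade this to finiteness. The coarse space morphism $p\colon\X\to X$ is proper (Keel--Mori), and it factors as $\X\xrightarrow{f}\Y\xrightarrow{q}X$ with $q$ separated, being the coarse space morphism of a separated stack; by the cancellation property of proper morphisms, $f$ is proper. Thus $f$ is representable, proper and \'etale, i.e.\ finite \'etale. Its degree is locally constant and equals $1$ over the dense open $V$ found above; since $\X$ and $\Y$ are smooth, each connected component of $\Y$ is irreducible and meets $V$, so the degree is identically $1$. A finite \'etale morphism of degree~$1$ is an isomorphism, hence so is $f$.

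I expect no serious obstacle, since the substantive work is already packaged into Corollary~\ref{cor:etbirat==>rep}. The only points needing a little care are matching ``unramified in codimension~$1$'' to the codimension hypothesis of that corollary, and invoking the properness of the coarse space morphism — this is precisely what promotes ``\'etale'' to ``finite \'etale'' and rules out degenerate possibilities such as $f$ being a non-surjective open immersion.
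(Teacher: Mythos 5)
Your proof is correct and follows essentially the same route as the paper's: establish birationality via the coarse spaces, get representability from Corollary \ref{cor:etbirat==>rep}, and use properness over the common coarse space (Keel--Mori) to conclude. The only cosmetic difference is the finishing step --- the paper invokes Zariski's Main Theorem for the resulting proper, quasi-finite, birational, representable morphism, whereas you count degrees of the finite \'etale map; both are standard.
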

\begin{proof}
  Let $\pi\colon \Y\to Y$ be the coarse space of $\Y$. By assumption, the composition $\pi\circ f\colon \X\to Y$ is a coarse space morphism. Since $\X$ and $\Y$ have trivial generic stabilizers, these coarse space morphisms are birational, so $f$ is birational. Since $\X$ and $\Y$ are proper and quasi-finite over $Y$ \cite[Theorem 1.1]{conrad-KM}, $f$ is proper and quasi-finite. Since $f$ is unramified in codimension 1, it is representable by Corollary \ref{cor:etbirat==>rep}.  Zariski's Main Theorem \cite[Theorem C.1]{fmn} then shows that $f$ is an isomorphism.
\end{proof}

\section*{Proof of Theorem \ref{thm:bottom-up-smooth-DM}}
  By the universal property of canonical stacks (Remark \ref{rmk:canonical-universal-property}), the coarse space morphism $\pi\colon \X\to X$ factors through the canonical stack. Since $X^\can\to X$ is an isomorphism away from codimension 2, we have that $\D$ is the ramification divisor of the map $\ttilde\pi\colon \X\to X^\can$.

  Let $\D_i$ be the irreducible components of $\D$ and suppose $\ttilde\pi$ is ramified over $\D_i$ with order $e_i$. Then $\Y=\sqrt{\D/X^\can}$ is obtained from $X^\can$ by taking the $e_i$-th root along $\D_i$ for all $i$. By the universal property of root stacks, we have an induced morphism $g:\X\to\Y$.

  Note that each $e_i$ is relatively prime to the characteristic of $k$; indeed, this can be checked \'etale locally on $X$, so we may assume $\X$ is a quotient of a smooth scheme by a finite group $G$ which stabilizes a point \cite[Lemma 2.2.3 and its proof]{Abramovich-Vistoli}. The ramification orders are then orders of subgroups of $G$, which are relatively prime to the characteristic of $k$ since $\X$ is assumed to be tame. Since the $e_i$ are prime to the characteristic of $k$, $\Y$ is a tame Deligne-Mumford stack. Since $\Y\to X^\can$ ramifies with order $e_i$ over $\D_i$ and ramification orders multiply in towers of discrete valuation rings, we have that $g$ is unramified in codimension 1.

  Let $\U\subseteq X^\can$ be the complement of the singular loci of the $\D_i$ and the intersections of the $\D_i$. The restriction of $\D$ to $\U$ is a simple normal crossing divisor, so the open substack  $\Y\times_{X^\can}\U$ of $\Y$ is a smooth Deligne-Mumford stack by \cite[1.3.b(3)]{fmn}. By Corollary \ref{lem:unram-iso-on-coarse-implies-iso}, $g$ restricts to an isomorphism of open substacks $\X\times_{X^\can}\U\to \Y\times_{X^\can}\U$. These open substacks have complements of codimension at least 2, so $g$ is Stein (i.e.~$g_*\O_\X\cong \O_\Y$).

  By Lemma \ref{lem:affine-diag}, $\Y\to X$ has affine diagonal. As $\pi\colon \X\to X$ is a coarse space morphism of a tame stack, it is cohomologically affine \cite[Theorem 3.2]{AOV}. Since $\X$ is separated, 
  $X$ is separated \cite[Theorem 1.1]{conrad-KM}, and so $X$ has quasi-affine diagonal. By \cite[Proposition 3.14]{Alper:good}, $g\colon \X\to \Y$ is cohomologically affine. As $g$ is Stein and cohomologically affine, it is a good moduli space morphism, so it is universal for maps to algebraic spaces 
  \cite[Theorem 6.6]{Alper:good}\footnote{Technically, this is not a direct application of \cite[Theorem 6.6]{Alper:good}, as Alper's result assumes $\Y$ is an algebraic space. However, since algebraic spaces are sheaves in the smooth topology, this property may be checked smooth locally on $\Y$. Good moduli space morphisms are stable under base change by \cite[Proposition 4.7(i)]{Alper:good}, so we are reduced to the case when $\Y$ is an algebraic space.}, so it is a relative coarse space morphism. Since $\X$ is smooth and tame, it follows that $\Y$ has tame quotient singularities. Since $g$ is an isomorphism away from codimension 2, it is a canonical stack morphism by Remark \ref{rmk:canonical-universal-property}. This completes the proof that $\pi$ factors as $\X\cong \sqrt{\D/X^\can}^\can\to \sqrt{\D/X^\can}\to X^\can\to X$.

If $D$ is Cartier, then the above proof may be modified by replacing $X^\can$ by $X$, $\D$ by $D$, and removing the singular locus of $X$ from $\U$. The modified proof shows that $\pi$ factors as $\X\cong \sqrt{D/X}^\can\to \sqrt{D/X}\to X$.

\section*{A local description of Theorem \ref{thm:bottom-up-smooth-DM}}

In this section, we provide a representation-theoretic description of the factorization of the coarse space morphism for stacks of the form $\X=[V/G]$, where $V$ is a vector space over $k$ on which an abstract finite group $G$ acts linearly. Since every separated Deligne-Mumford stack is \'etale locally a quotient by the stabilizer of a point \cite[Lemma 2.2.3 and its proof]{Abramovich-Vistoli} and smooth $k$-varieties with group actions have equivariant \'etale morphisms to their tangent spaces, this approach actually provides an alternative proof of the theorem, but we omit those details here.

\begin{theorem}\label{thm:local-characterization}
  Suppose $\X=[V/G]$, where $V$ is a vector space over $k$ on which an abstract finite group $G$ acts linearly and faithfully. Let $H\subseteq G$ be the subgroup generated by pseudoreflections of $V$, and $H'\subseteq H$ be its commutator subgroup. Then the factorization of the coarse space morphism in Theorem \ref{thm:bottom-up-smooth-DM} is
\[\xymatrix @R-1.2pc{
  \llap{$\X\cong\;$} \sqrt{\D/X^\can}^\can \ar[r]\ar@{}[d]|\parallel & \sqrt{\D/X^\can} \ar[r]\ar@{}[d]|\parallel & X^\can \ar[r]\ar@{}[d]|\parallel & X\ar@{}[d]|\parallel\\
  [V/G] \ar[r] & [(V/H')/(G/H')] \ar[r] & [(V/H)/(G/H)] \ar[r] & V/G
}\]
\end{theorem}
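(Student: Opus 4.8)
The plan is to match each stack in the bottom row of the diagram with its counterpart above it. By Theorem~\ref{thm:bottom-up-smooth-DM} this reduces to two explicit identifications of stacks over the base, namely $X^\can\cong[(V/H)/(G/H)]$ and $\sqrt{\D/X^\can}\cong[(V/H')/(G/H')]$ as stacks over $X^\can$: the bottom equality $X=V/G$ is immediate since $G$ acts faithfully, and the isomorphism $\X\cong\sqrt{\D/X^\can}^\can$ is then exactly the content of Theorem~\ref{thm:bottom-up-smooth-DM}. Note first that $H\trianglelefteq G$, since conjugation carries a pseudoreflection to a pseudoreflection, and hence $H'=[H,H]\trianglelefteq G$ as well; so all the quotient stacks above are defined, together with the evident morphisms between them induced by $V\to V/H'\to V/H$ and $G\to G/H'\to G/H$.

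To identify the canonical stack, note that $k[V]^H$ is a polynomial ring by the tame Chevalley--Shephard--Todd theorem (valid since $|H|$ is prime to the characteristic of $k$), so $V/H$ is smooth and $[(V/H)/(G/H)]$ is a smooth tame Deligne--Mumford stack with coarse space $(V/H)/(G/H)=V/G$; a short argument with function fields shows $G/H$ acts faithfully on $V/H$, so this stack has trivial generic stabilizer. By the universal property of canonical stacks (Remark~\ref{rmk:canonical-universal-property}) it remains to check that the coarse space map $[(V/H)/(G/H)]\to V/G$ is an isomorphism away from codimension $2$, equivalently that $G/H$ acts on $V/H$ without pseudoreflections. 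For this, observe that the pointwise stabilizer in $G$ of a reflecting hyperplane $W$ consists of pseudoreflections and the identity, hence lies in $H$ and so equals the pointwise stabilizer $H_W$; since $V\to V/G$ and $V\to V/H$ are ramified exactly along the reflecting hyperplanes of $H$ (which are the same as those of $G$, by construction of $H$) and with equal ramification indices $|G_W|=|H_W|$ there, the tower $V\to V/H\to V/G$ shows $V/H\to V/G$ is unramified in codimension $1$; hence the quotient map $V/H\to(V/H)/(G/H)=V/G$ has trivial inertia in codimension $1$, i.e.\ $G/H$ has no pseudoreflections. Therefore $X^\can\cong[(V/H)/(G/H)]$.

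Next I would pin down $\D$ with its orders $e_i$ and then identify the root stack. Pulling $\ttilde\pi\colon\X\to X^\can$ back along the smooth atlas $V/H\to[(V/H)/(G/H)]$ turns it into the coarse space map $[V/H]\to V/H$; this stack has nontrivial inertia in codimension $1$ exactly along the image $\D'_W$ of each reflecting hyperplane $W$ of $H$, where the inertia is the cyclic group $H_W$ of order $e_W$, so $\ttilde\pi$ is ramified to order $e_W$ over $\D'_W$. The irreducible components $\D_i$ of $\D$ are then the $G/H$-orbits of the $\D'_W$, and the $e_i$ the corresponding $e_W$ (well defined since conjugate hyperplanes have conjugate, hence equal-order, stabilizers). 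It now remains to show $[(V/H')/(G/H')]\to[(V/H)/(G/H)]$ is the root stack of $\D$ with order $e_i$ along $\D_i$. Since the formation of root stacks commutes with the smooth base change $V/H\to X^\can$, and since $[(V/H')/(G/H')]\cong[\,[(V/H')/(H/H')]/(G/H)\,]$ gives $[(V/H')/(G/H')]\times_{X^\can}(V/H)\cong[(V/H')/(H/H')]$, this reduces to showing that $[(V/H')/(H/H')]\to V/H$ is $\sqrt{\D'/(V/H)}$ compatibly with the $G/H$-actions, where $\D'=\bigcup_W\D'_W$ carries the orders $e_W$.

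For this last point I would invoke the structure theory of relative invariants of finite reflection groups (Stanley's theorem). It gives, for each $H$-orbit $j$ of reflecting hyperplanes (all with stabilizers of a common order $e_j$), that the product $\delta_j$ of the defining linear forms of the hyperplanes in orbit $j$ is an $H$-semi-invariant for a linear character of order exactly $e_j$; consequently $\delta_j^{e_j}\in k[V]^H$ is, up to a unit, a reduced equation for the component $\bigcup_{W\in j}\D'_W$, the $\delta_j$ generate $k[V]^{H'}$ as a $k[V]^H$-algebra, and $H/H'\cong\prod_j\mu_{e_j}$ acts on $k[V]^{H'}=k[V]^H[\delta_1,\dots,\delta_r]$ through the weights of the $\delta_j$ (the roots of unity being available in $k$ since the pseudoreflections of $H$ act through them). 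This exhibits $\spec k[V]^{H'}$ as the standard presentation $\spec\bigl(k[V]^H[s_j]/(s_j^{e_j}-\delta_j^{e_j})\bigr)$ of $\sqrt{\D'/(V/H)}$, with matching $\prod_j\mu_{e_j}$-action, so $[(V/H')/(H/H')]\cong\sqrt{\D'/(V/H)}$; this is $G/H$-equivariant because $G$ permutes the hyperplane orbits compatibly with the $\delta_j$. Descending by $G/H$ gives $[(V/H')/(G/H')]\cong\sqrt{\D/X^\can}$, and Theorem~\ref{thm:bottom-up-smooth-DM} then supplies $\X\cong\sqrt{\D/X^\can}^\can$, completing the diagram. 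The main obstacle is this reflection-theoretic input --- Stanley's explicit description of $k[V]^{H'}$ over $k[V]^H$, and the earlier comparison of ramification indices --- together with the bookkeeping needed to keep every identification $G/H$-equivariant, in particular matching the components $\D_i$ of the divisor $\D$ on the \emph{stack} $X^\can$ with $G/H$-orbits of hyperplane images and using that the root stack does not depend on how components of equal order are grouped (cf.\ Remark~\ref{doesnt-matter-how-you-root}).
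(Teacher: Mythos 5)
Your proposal is correct and follows essentially the same route as the paper: identify $X^\can$ with $[(V/H)/(G/H)]$ via Chevalley--Shephard--Todd and the absence of pseudoreflections in $G/H$, reduce the root-stack identification \'etale-locally on the target to $[(V/H')/(H/H')]\to V/H$, and exhibit $k[V]^{H'}$ as the standard presentation of the root stack using the semi-invariant products of linear forms over the hyperplane orbits. The only differences are cosmetic: you cite Stanley's structure theory for relative invariants where the paper carries out the character computation and the generation of $k[V]^{H'}$ by hand, and you obtain the leftmost identification $\X\cong\sqrt{\D/X^\can}^\can$ from Theorem \ref{thm:bottom-up-smooth-DM} rather than from the paper's direct observation that $H'$ contains no pseudoreflections because commutators have determinant $1$.
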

\begin{proof}
  Note that for a finite group $K$ acting linearly on a vector space $U$, the coarse space morphism $[U/K]\to U/K$ is a canonical stack morphism if and only if $K$ acts without pseudoreflections.

  By the Chevalley-Shephard-Todd theorem, $V/H$ is a vector space. 
  Moreover, $G/H$ acts linearly on $V/H$ without pseudoreflections. Therefore $[(V/H)/(G/H)]\to V/G$ is a canonical stack morphism. This is the usual construction of the canonical stack.

  The action of $H'$ on $V$ has no pseudoreflections since commutators act with determinant $1$, and pseudoreflections do not. Therefore $[V/H']\to V/H'$ is a canonical stack morphism. Since $(V/H')\to [(V/H')/(G/H')]$ is an \'etale cover, and the property of being a canonical stack morphism can be checked \'etale locally on the target, $[V/G]\to [(V/H')/(G/H')]$ is also a canonical stack morphism.

  It remains to show that $[(V/H')/(G/H')]\to [(V/H)/(G/H)]$ is a root stack morphism. Since this is a property which can be checked \'etale locally on the target, it suffices to show that $[(V/H')/(H/H')]\to V/H$ is a root stack morphism.

  For any pseudoreflection hyperplane $W$, let $H\cdot W$ be the set of $H$-translates of $W$. For each hyperplane $U$ in this orbit, choose a linear function $\ell_U$ which vanishes on $U$, and let $f_{H\cdot W}=\prod_{U\in H\cdot W} \ell_U$. The vanishing locus of $f_{H\cdot W}$ is $H$-invariant, so $H$ acts on $f_{H\cdot W}$ by some character $\chi_{H\cdot W}$ of $H$.

  Given a pseudoreflection $r\in H$, we can break $H\cdot W$ up into $r$-orbits. We analyze the contribution of each of these cycles to $\chi_{H\cdot W}(r)$. If a hyperplane $U$ in $H\cdot W$ is fixed by $r$, then either $r$ is a pseudoreflection through $U$ (in which case $r(\ell_U)=\det(r)^{-1}\cdot \ell_U$) or $r$ is a pseudoreflection through a plane perpendicular to $U$ (in which case $r(\ell_U)=\ell_U$). Now consider the case where $U$ is not fixed by $r$, letting $U_i=r^i(U)$. Since $U$ is not fixed by $r$, $\ell_U$ has non-zero components in both $W$ and $W^\perp$, so $\{U_0, U_1, \dots, U_{\ord(r)-1}\}$ are distinct hyperplanes. Since $r(\ell_{U_i})$ is a linear function vanishing on $U_{i+1}$, we have that $r(\ell_{U_i})=a_i\cdot\ell_{U_{i+1}}$ for some constant $a_i$. We have that $r$ acts on $\prod_{i=0}^{\ord(r)-1} \ell_{U_i}$ by $\prod_{i=0}^{\ord(r)-1}a_i$. On the other hand, $\ell_{U_{\ord(r)}}=\ell_{U_0} = r^{\ord(r)}(\ell_{U_0}) = (\prod_{i=0}^{\ord(r)-1}a_i)\cdot \ell_{U_{\ord(r)}}$, so $\prod_{i=0}^{\ord(r)-1}a_i = 1$. Summing up, if $r$ is a pseudoreflection through a hyperplane in $H\cdot W$, then $\chi_{H\cdot W}(r) = \det(r)^{-1}$; otherwise $\chi_{H\cdot W}(r) = 1$.

  Let $W_1$, \dots, $W_k$ be $H$-orbit representatives (so every pseudoreflection hyperplane is in the $H$-orbit of exactly one of the $W_i$), and $r_1$, \dots, $r_k$ be primitive pseudoreflections through the $W_i$, of orders $e_1$, \dots, $e_k$, respectively. We then have that the map $H/H'\to \mu_{e_1}\times\cdots\times \mu_{e_k}$ induced by $\chi_{H\cdot W_1}\times\cdots\times \chi_{H\cdot W_k}$ is an isomorphism.

Let $V/H = \spec(k[g_1, \dots, g_n])$. Then we claim that $V/H' = \spec(k[g_1,\dots, g_n, f_{H\cdot W_1}, \dots, f_{H\cdot W_k}])$. Note first that each $f_{H\cdot W_i}$ is $H'$-invariant, since $H$ acts on it by the character $\chi_{H\cdot W_i}$, and $H'$ is in the kernel of any character. To see that this is the full ring of $H'$-invariants, note that any $H'$-invariant function is a sum of $H/H'$-semi-invariant functions, so it suffices to show that any $H/H'$-semi-invariant function $f$ is in this ring. If each $r_i$ acts trivially on $f$, then $f$ is $H$-invariant, so it is some polynomial combination of the $g_j$. If $r_i$ acts non-trivially on $f$, then $f$ must vanish along $W_i$. Since $f$ is $H'$-invariant, all the conjugates of $r_i$ must also act non-trivially on $f$, so $f$ must vanish along all of $H\cdot W_i$, so it must be divisible by $f_{H\cdot W_i}$, and $f/f_{H\cdot W_i}$ is $H'$-invariant.  By induction on the degree of $f$, it is a polynomial in the $g_j$ and $f_{H\cdot W_i}$.

We therefore have an $H/H'$-equivariant closed immersion of $V/H'$ into $\AA^{n+k}$, where $\mu_{e_i}$ acts only on the $(n+i)$-th coordinate. The following diagram is cartesian:
\[\xymatrix{
  \bigl[(V/H')\bigm/(H/H')\bigr] \ar[r]\ar[d] & \bigl[\AA^{n+k}\bigm/(H/H')\bigr]\ar[d]\\
  V/H \ar[r] & \AA^{n+k}/(H/H')
}\]

Since each $\mu_{e_i}$ acts on only one coordinate of $\AA^{n+k}$, the right vertical arrow is a root stack morphism ramified to order $e_i$ along the $(n+i)$-th coordinate hyperplane. It follows that the left arrow is a root stack morphism ramified to order $e_i$ along the divisor which is the image of $H\cdot W_i$.
\end{proof}

\begin{remark}\label{rmk:reflections->ramification}
The proof of Theorem \ref{thm:local-characterization} shows that for a quotient stack $\X=[U/G]$, the ramification divisor of the map $[U/G]\to U/G$ is the image of the pseudoreflection divisors, and the ramification degrees are the orders of the pseudoreflections.
\end{remark}

\section*{Examples, Counterexamples, and Questions}

We begin this section by showing that the casual description of a stack $\X$ as a variety $X$ together with stabilizer groups attached to some of its subvarieties does not uniquely specify $\X$.

\begin{example}[Different stacks with the same coarse space and stabilizers]\label{eg:non-affine-lines}
We work over a field $k$ with $char(k)\neq2$.
\begin{enumerate}
\item (\emph{Smooth separated stack}) Consider the action of $\ZZ/2$ on $\AA^1$ given by $x\mapsto -x$. The quotient $\X=[\AA^1/(\ZZ/2)]$ is a smooth separated Deligne-Mumford stack with trivial generic stabilizer. Its coarse space is $X=\AA^1$ (with coordinate $x^2$) and it has a single $\ZZ/2$ stabilizer at the origin.

\item (\emph{Singular separated stack}) Consider the singular Deligne-Mumford stack $\X$ given by the quotient of the axes in $\AA^2$ by the $\ZZ/2$-action $(x,y)\mapsto(y,x)$. The coarse space of $\X$ is also $\AA^1$ (with coordinate $x+y$) and the stabilizer at the origin is $\ZZ/2$.

\item (\emph{Smooth non-separated stack}) Let $G$ be the non-separated affine line. We can think of $G$ as a group scheme over $\AA^1$ via the natural map $G\to \AA^1$. The stack $BG$ has coarse space $\AA^1$ and a stabilizer of $\ZZ/2$ at the origin. Note that $BG$ is a smooth Deligne-Mumford stack with trivial generic stabilizer, but is not separated (its diagonal isn't separated, so not proper). Note also that the map $BG\to \AA^1$ is a coarse space morphism (i.e.~it is a universal map from $BG$ to an algebraic space, and is bijective on geometric points), but not a good moduli space morphism since it is not cohomologically affine.\qedhere
\end{enumerate}
\end{example}

In the next example, we illustrate Theorem \ref{thm:bottom-up-smooth-DM} with $\X=[\AA^n/S_n]$.

\begin{example}
\label{ex:An-mod-Sn}
Consider the stack $\X=[\AA^n/S_n]$, where the $S_n$-action permutes the coordinates, over a field whose characteristic is prime to $n!$. The coarse space is $X=\AA^n$, with coordinates given by the elementary symmetric functions. The $S_n$-action is generated by pseudoreflections corresponding to transpositions; since all transpositions are in the same conjugacy class as $(1\ 2)$, the ramification locus of the coarse space map $\X\to X$ is given by $(x_1-x_2)\cap k[x_1,\dots, x_n]^{S_n}$, which is generated by the descriminant $\Delta := \prod_{i-j}(x_i-x_j)^2$, and the order of ramification is $2$ (see Remark \ref{rmk:reflections->ramification}). The ramification divisor $D\subseteq X$ is given by expressing $\Delta$ in terms of the elementary symmetric functions. Up to sign, this is given by the resultant of $f$ and $f'$, where $f(x)=x^n-e_1x^{n-1}+\cdots + (-1)^n e_n$ is the polynomial with the elementary symmetric functions as coefficients.

Since the action of $S_n$ on $\AA^n$ is the sum of the trivial and standard representations of $S_n$, the features of this example are all present if one restricts to the vanishing locus of $e_1=x_1+\cdots +x_n$, the first elementary symmtric function. In this case, the coarse space of $[\AA^{n-1}/S_n]$ is $\AA^{n-1}=\spec k[e_2,\dots, e_n]$, the ramification divisor is given by the discriminant of $f(x)=x^n+e_2x^{n-2}+\cdots + (-1)^ne_n$, and the ramification order is 2.
\end{example}

\begin{example}[Square root of the cuspidal cubic]
In this example, we construct the smooth stack $\X$ with coarse space $\AA^2=\spec \CC[a,b]$ so that the ramification divisor is the cuspidal cubic $D=V(a^3-b^2)$ with ramification order $2$. The coarse space is already smooth, so it is equal to its canonical stack. The square root of the cuspidal cubic is $\sqrt{D/\AA^2} = [Y/(\ZZ/2)]$, where $Y=\spec \CC[a,b,t]/(b^3-a^2-t^2)$ is the $A_2$-singularity, and $\ZZ/2$ acts as $t\mapsto -t$. The canonical stack of $Y$ is $Y^\can=[(\spec \CC[x,y])/(\ZZ/3)]$ where the $\ZZ/3$-action is given by $(x,y)\mapsto (\zeta x,\zeta^2 y)$ with $\zeta$ a third root of unity. The relation between the $x,y$ coordinates and the $a,b,t$ coordinates is $x^3=a+ti$, $y^3=a-ti$, and $xy=b$. We see that the $\ZZ/2$-action on $Y$ fixes $xy$ and swaps $x^3$ and $y^3$. It therefore lifts to an action on $\AA^2$ swapping $x$ and $y$. Together with the $\ZZ/3$-action, this lifted $\ZZ/2$-action generates an action of $S_3$ on $\AA^2$. This $S_3$-action is the unique $2$-dimensional irreducible representation. We therefore see that $\X = [Y/(\ZZ/2)]^\can = [Y^\can/(\ZZ/2)] = [\AA^2/S_3]$. This is the $n=3$ case of Example \ref{ex:An-mod-Sn}.
\end{example}
\begin{remark}
The above example can be done over $\RR$ instead of over $\CC$, but since $\RR$ has no primitive cube root of unity, the stabilizer of the origin is $(\ZZ/2)\ltimes \mu_3$, a twisted version of $S_3$. To get a stabilizer of $S_3$, one would square root the cuspidal cubic given by the discriminant $\Delta = 4a^3-27b^2$.
\end{remark}

\begin{example}[Reducible ramification divisor without simple normal crossings]
Let $k$ be a field with primitive 4-th roots of unity and with $\textrm{char}(k)\neq2$. Consider $D\subseteq \AA^2$ defined by $y(x^2-y)=0$. Then $\sqrt{D/\AA^2} = [Y/(\ZZ/2)]$ where $Y=\spec k[x,y,t]/(y^2-x^2y+t^2)$ and $\ZZ/2$ acts as $t\mapsto -t$. Letting $z=y-\frac{1}{2}x^2$, we see that the defining equation of $Y$ is $\frac{1}{4}x^4=z^2+t^2$. After a further change of coordinates $u=2(z+ti)$ and $v=2(z-ti)$, we see that $Y$ is the $A_3$-singularity defined by the equation $x^4=uv$. In these coordinates, the $\ZZ/2$-action on $Y$ fixes $x$ and swaps $u$ and $v$. Since $Y$ is the $A_3$-singularity, $Y^\can=[\AA^2/(\ZZ/4)]$ where $\ZZ/4$ acts on $\AA^2$ by $(a,b)\mapsto (ia,-ib)$. The relation between the $a,b$ coordinates and the $x,u,v$ coordinates is given by $x=ab$, $u=a^4$, and $v=b^4$. Note that the $\ZZ/2$-action on $Y$ lifts to $\AA^2$ by swapping $a$ and $b$. With the action of $\ZZ/4$, this generates an action of the dihedral group $D_8$, and so $\sqrt{D/\AA^2}^\can = [\AA^2/D_8]$. This argument generalizes to show that $\sqrt{V(y(x^n-y))/\AA^2}^\can = [\AA^2/D_{4n}]$.
\end{example}

\begin{example}[Singular coarse space]
Let $X=\spec k[a,b,c]/(ab-c^m)$ be the $A_{m-1}$-singularity and let $D\subseteq X$ be a Weil divisor which generates the class group of $X$. The canonical stack is $X^\can=[\spec(k[x,y])/\mu_m]$ where the action is $\zeta\cdot (x,y)=(\zeta x,\zeta^{-1}y)$, with $(a,b,c)=(x^m,y^m,xy)$. $D$ lifts to the divisor $[V(x)/\mu_m]$. The root stack $\sqrt[n]{\D/X^\can}$ is then 
\[
  \sqrt[n]{[V(x)/\mu_m]\bigm/[\spec k[x,y]/\mu_m]} = [\sqrt[n]{V(x)/\spec k[x,y]}/\mu_m] = \bigl[[\spec(k[x,y,t]/(x-t^n))/\mu_n]/\mu_m\bigr]
\]
where $\mu_n$ acts as $\zeta\cdot (x,y,t)=(x,y,\zeta t)$. This stack is the quotient $[\spec(k[y,t])/\mu_{mn}]$, where the action is $\zeta\cdot (y,t)=(\zeta^{-1}y,\zeta^m t)$.
\end{example}

Given a smooth separated Deligne-Mumford stack $\X$ with trivial generic stabilizer, we can associate to it the tuple $(X,D_i,e_i)$, where $X$ is its coarse space, the $D_i$ are the irreducible components of the ramification divisor of the coarse space map $\pi:\X\to X$, and $e_i$ is ramification degree of $\pi$ over $D_i$. Although our main result shows that $\X$ is characterized by $(X,D_i,e_i)$, we now show that not every tuple $(X,D_i,e_i)$ arises in this manner.

\begin{example}[Not every tuple $(X,D_i,e_i)$ arises]\label{ex:not-every-pair}
Consider the cone $D=V(xy+z^2)$ in $X=\AA^3$. Taking the square root of $X$ along $D$ yields a stack $\Y$ with a singularity of the form $xy+z^2=t^2$. This is isomorphic to the toric singularity $xy=uv$ (with $(u,v)=(t+z,t-z)$), which is not a quotient singularity (and hence has no canonical Deligne-Mumford stack). It follows from Theorem \ref{thm:bottom-up-smooth-DM} that there is no smooth separated Deligne-Mumford stack $\X$ with trivial generic stabilizer and coarse space $X$ whose coarse space morphism $\X\to X$ is ramified over $D$ with degree 2.
\end{example}

In light of this example, we ask:

\begin{question}
\label{q:which-X-D-occur}
Which tuples $(X,D_i,e_i)$ arise in the manner described above? That is, given an algebraic space $X$ with tame quotient singularities, a reduced Weil divisor $D$ on $X$ with irreducible components $D_i$, and integers $e_i\geq 2$, is there a smooth separated Deligne-Mumford stack $\X$ with a coarse space morphism $\X\to X$ ramified over $D_i$ with degree $e_i$?
\end{question}

\begin{remark}
It suffices to answer Question \ref{q:which-X-D-occur} formally locally: which singularity types and ramification orders can arise for the quotient of a vector space $V$ by a faithful linear action a finite group $G$? By Remark \ref{rmk:reflections->ramification}, the ramification divisors are the images of the hyperplanes of pseudoreflections, and the ramification degrees are the orders of the pseudoreflections.
\end{remark}

\begin{example}[Even if $(X,D_i,e_i)$ arises, $(X,D_i,ne_i)$ may not]
 Let $D\subseteq \AA^2_\CC$ be the cuspidal cubic $x^3-y^2=0$. The root stack $\sqrt[n]{D/X}$ has a unique singularity, which is of the form $x^3-y^2=z^n$. For $n=2,3,4,5$, this is a singularity of type $A_2$, $D_4$, $E_6$, and $E_8$, respectively (see e.g.~\cite[\S4.2]{reid_algebraic_surfaces} for descriptions of these singularities).\footnote{It is immediate that $x^3-y^2=z^n$ is the $A_2$, $E_6$, and $E_8$ singularity for $n=2, 4, 5$, respectively. To see that $x^3-y^2=z^3$ is the $D_4$-singularity, make the coordinate change $(u,v) = (\sqrt[3]{4/7}(x+z),\sqrt[6]{4/189}\frac{1}{2}(z-x))$.} In general, for $n\geq 6$, the singularity $x^3-y^2=z^n$ is not a quotient singularity; for $6\le n\le 11$, this singularity is elliptic \cite[Exercise 18, Chapter 4]{reid_algebraic_surfaces}. Thus, $(\AA^2,D,n)$ arises as the coarse space and ramification divisor of a smooth separated Deligne-Mumford stack for $n\leq 5$, but in general not for $n\ge 6$.
\end{example}

In light of this example, we ask:

\begin{question}
  If $X$ has tame quotient singularities and $D\subseteq X$ is an irreducible divisor which does not have simple normal crossings, does $(X,D,n)$ fail to arise for all sufficiently large $n$?
\end{question}


\end{document}